\newtheorem{theorem}{Theorem}[section]
\newtheorem{proposition}[theorem]{Proposition}
\newtheorem{corollary}[theorem]{Corollary}
\newtheorem{lemma}[theorem]{Lemma}
\theoremstyle{definition}
\newtheorem{definition}[theorem]{Definition}
\newtheorem{remark}[theorem]{Remark}
\begin{document}
\title{On the escape rate of unique beta-expansions}
\author{Jung-Chao Ban}
\address[Jung-Chao Ban]{Department of Applied Mathematics, National Dong Hwa
University, Hualien 970003, Taiwan, R.O.C.}
\email{jcban@mail.ndhu.edu.tw}
\author{Chih-Hung Chang*}
\thanks{*To whom correspondence should be addressed}
\address[Chih-Hung Chang]{Department of Applied Mathematics, National
University of Kaohsiung, Kaohsiung 81148, Taiwan, R.O.C.}
\email{chchang@nuk.edu.tw}
\author{Bing Li}
\address[Bing Li]{Department of Mathematics, South China University of
Technology, Guangzhou 510640, PR China}
\email{libing0826@gmail.com}
\date{}

\begin{abstract}
Let $1<\beta \leq 2$. It is well-known that the set of points in $%
[0,1/(\beta -1)]$ having unique $\beta $-expansion, in other words, those
points whose orbits under greedy $\beta $-transformation escape a hole
depending on $\beta $, is of zero Lebesgue measure. The corresponding escape
rate is investigated in this paper. A formula which links the Hausdorff
dimension of univoque set and escape rate is established in this study. Then
we also proved that such rate forms a devil's staircase function with
respect to $\beta $.
\end{abstract}
\maketitle

\baselineskip=1.2 \baselineskip

\noindent {\small \textbf{Key Words}: escape rate, beta-expansion, open
system}

\noindent{\small \textbf{AMS Subject Classification}: 37E05, 11A63}

\section{Introduction}

\subsection{History and motivation: beta shifts}

There are many ways to represent real numbers such as decimal expansion,
binary expansion, etc. In 1957, R\'{e}nyi \cite{Renyi-AMH1957} generalized
the expansions with integer bases to any base $\beta >1$ including
non-integer bases.

Let $1<\beta \leq 2$ and $x\geq 0$. Write 
\begin{equation}
x=\sum_{n=1}^{\infty }\frac{\varepsilon _{n}}{\beta ^{n}}\ \ \text{with}\ \
\varepsilon _{n}\in \{0,1\}\ \text{for all}\ n\geq 1\text{.}
\label{expression}
\end{equation}%
We call this expression or the sequence $(\varepsilon _{1},\cdots
,\varepsilon _{n},\cdots )$ a $\beta $\emph{-expansion of }$x$. It is
obvious that the smallest number which can be represented by (\ref%
{expression}) is $0$ and the largest one is $\frac{1}{\beta -1}$.

Let $J_{\beta }=[0,\frac{1}{\beta -1}]$ and 
\begin{equation*}
\Omega _{\beta }=\{0,1\}^{\mathbb{N}}:=\{(w_{1},\cdots ,w_{n},\cdots
):w_{n}=0\ \text{or}\ 1\ \text{for all}\ n\geq 1\}.
\end{equation*}%
Denote by $\Omega _{\beta }^{n}=\{0,1\}^{n}$ and $\Omega _{\beta }^{\ast
}=\cup _{n=1}^{\infty }\Omega _{\beta }^{n}$. Let $\prec $ and $\preceq $ be
the \emph{lexicographical order} on $\Omega _{\beta }$. More precisely, $%
w\prec w^{\prime }$ means that there exists $k\in \mathbb{N}$ such that $%
w_{i}=w_{i}^{\prime }$ for all $1\leq i<k$ and $w_{k}<w_{k}^{\prime }$,
meanwhile, $w\preceq w^{\prime }$ means that $w\prec w^{\prime }$ or $%
w=w^{\prime }.$ This order can be extended to $\Omega _{\beta }^{\ast }$ by
identifying a finite block $(w_{1},\dots ,w_{n})$ with the sequence $%
(w_{1},\dots ,w_{n},0^{\infty })$. The topic on the number of $\beta $%
-expansions of reals in $J_{\beta }$ is very popular since Erd\"{o}s
investigated the reals with unique $\beta $-expansion in 1990s. \label{1Here}

\begin{theorem}[\protect\cite{EJK-BdlSMdF1990}]
\label{EJK90} If $1<\beta <\frac{1+\sqrt{5}}{2}$, then every $x\in (0,\frac{1%
}{\beta -1})$ has a continuum of different $\beta $-expansions.
\end{theorem}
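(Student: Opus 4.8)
The plan is to work directly with the digit-by-digit dynamics of the $\beta$-expansion and to exhibit, for each interior point $x$, a full binary tree of admissible digit choices with infinitely many genuine branchings along every path; this produces a continuum of distinct expansions. Write $M=\tfrac{1}{\beta-1}$ and, for a tail value $y\in J_{\beta}$, observe that prescribing the next digit $\varepsilon\in\{0,1\}$ amounts to passing to $T_{\varepsilon}(y):=\beta y-\varepsilon$, which must again lie in $J_{\beta}$. A short computation shows that digit $0$ is forced on $L:=[0,1/\beta)$, digit $1$ is forced on $R:=(\tfrac{1}{\beta(\beta-1)},M]$, while on the \emph{switch region} $S:=[\tfrac{1}{\beta},\tfrac{1}{\beta(\beta-1)}]$ both digits keep the orbit inside $J_{\beta}$. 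The endpoints $0$ and $M$ are the fixed points of $T_{0}$ and $T_{1}$ and carry only the trivial expansions $0^{\infty}$ and $1^{\infty}$.

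The first key step is a geometric consequence of the hypothesis $\beta<\tfrac{1+\sqrt{5}}{2}$, which is exactly equivalent to $\beta(\beta-1)<1$, i.e.\ to $q:=\tfrac{1}{\beta(\beta-1)}>1>\tfrac{1}{\beta}=:p$. Thus $1\in\operatorname{int}(S)$ and $S$ is a nondegenerate interval straddling $1$. I will use this to prove (Lemma A) that every $y\in(0,M)$ can be driven into $\operatorname{int}(S)$ by finitely many admissible steps without meeting $0$ or $M$: if $y\in L$, the forced map $T_{0}$ multiplies by $\beta$, so some iterate first satisfies $\beta^{n}y\ge p$; since the previous value satisfies $\beta^{n-1}y<p=1/\beta$, the first landing value obeys $\beta^{n}y<\beta p=1<q$, hence $\beta^{n}y\in[p,q)\subseteq S$, and in the boundary case $\beta^{n}y=p$ one further step lands at $T_{0}(p)=1\in\operatorname{int}(S)$. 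The case $y\in R$ is identical after applying the reflection $\sigma(y)=M-y$, which conjugates $T_{0}$ and $T_{1}$ and interchanges $L$ and $R$.

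The second step (Lemma B) records that $\operatorname{int}(S)$ consists precisely of \emph{good} branch points: using $\beta p=1$ and $\beta q=M$, for $y\in(p,q)$ one finds $T_{0}(y)=\beta y\in(1,M)$ and $T_{1}(y)=\beta y-1\in(0,M-1)$, both contained in the open interval $(0,M)$. Combining the two lemmas yields the tree: starting from $x\in(0,M)$, route to a point of $\operatorname{int}(S)$ by Lemma A, split into the two interior children supplied by Lemma B, and recurse on each child. Every child again lies in $(0,M)$, so Lemma A applies afresh and each path branches once more; thus every infinite path carries infinitely many binary branchings. The resulting digit sequences are pairwise distinct (two diverging paths disagree at their first common branch), each of them encodes $x$ by construction, and their number is $2^{\aleph_{0}}$, a continuum.

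I expect the main obstacle to be Lemma A, and within it the verification that the forced orbit cannot \emph{overshoot} the switch region: it is precisely the inequality $q>1$, equivalent to $\beta<\tfrac{1+\sqrt{5}}{2}$, that forces the first return to the level $p$ to land below $1$ and hence inside $S$. The remaining delicate point is the bookkeeping at the boundary values $p$ and $q$, where one of the two branches degenerates to $0$ or $M$; one must check, as above, that a single extra forced step then re-enters $\operatorname{int}(S)$, so that branching is genuinely sustained along \emph{every} path rather than occurring only once.
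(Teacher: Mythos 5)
The paper does not prove this statement: Theorem \ref{EJK90} is imported verbatim from the cited reference \cite{EJK-BdlSMdF1990} and used as a black box, so there is no internal proof to compare against. Judged on its own, your branching-tree argument is correct and is essentially the classical proof of this result. The verifications all check out: digit $\varepsilon$ is admissible at $y$ iff $\beta y-\varepsilon\in[0,M]$, which gives exactly your forced regions $L=[0,1/\beta)$, $R=(q,M]$ and the switch region $S=[p,q]$ with $p=1/\beta$, $q=1/\beta(\beta-1)$; the hypothesis $\beta<\frac{1+\sqrt{5}}{2}$ is equivalent to $\beta(\beta-1)<1$, i.e.\ $q>1>p$, and this is used exactly where it must be, namely in the no-overshoot estimate (the first iterate $\beta^{n}y\geq p$ satisfies $\beta^{n}y<\beta p=1<q$, so the forced orbit cannot jump over $S$); the reflection $\sigma(y)=M-y$ does conjugate $T_{0}$ with $T_{1}$ and carries $R$ onto $L$ and $S$ onto $S$, so the $R$-case follows; and Lemma B correctly shows both children of a point of $(p,q)$ land in the open interval $(0,M)$, so the recursion never gets trapped at the endpoints where the continuation is unique. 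One small point worth making explicit rather than leaving to ``by construction'': an infinite admissible path $(\varepsilon_{n})$ with remainders $y_{n}=\beta y_{n-1}-\varepsilon_{n}\in[0,M]$ represents $x$ because $x=\sum_{k=1}^{n}\varepsilon_{k}\beta^{-k}+\beta^{-n}y_{n}$ and $\beta^{-n}y_{n}\to 0$; with that line added, distinct paths give distinct sequences (they differ at their first divergence) and the perfect binary branching yields $2^{\aleph_{0}}$ expansions, which is the full strength of the claim.
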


\begin{theorem}[\protect\cite{Sidorov-AMM2003}]
\label{Sidorov03} If $\frac{1+\sqrt{5}}{2}\leq \beta <2$, then $\lambda $%
-almost every $x\in (0,\frac{1}{\beta -1})$ has a continuum of different $%
\beta $-expansion, where $\lambda $ is the Lebesgue measure on $\mathbb{R}$.
\end{theorem}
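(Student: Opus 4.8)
The plan is to recast a $\beta$-expansion of $x$ as an infinite path in a binary branching tree and to show that \emph{failure} to have continuum many expansions forces $x$ into a small exceptional set built from the univoque set. Put $M=\frac{1}{\beta-1}$ and introduce the two inverse branches $T_0(x)=\beta x$ and $T_1(x)=\beta x-1$; a sequence $(\varepsilon_n)\in\Omega_\beta$ is a $\beta$-expansion of $x$ precisely when the orbit $x_0=x$, $x_n=T_{\varepsilon_n}(x_{n-1})$ stays in $J_\beta$ for all $n$. A direct computation shows $T_0(y)\in J_\beta$ iff $y\le M/\beta$ and $T_1(y)\in J_\beta$ iff $y\ge 1/\beta$, so both digits are admissible exactly on the \emph{switch region} $S=[\tfrac{1}{\beta},\tfrac{1}{\beta(\beta-1)}]$, while off $S$ the continuation is forced. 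Since $\beta<2$ we have $1/\beta<M/\beta$, so $S$ is a genuine interval, and since $\beta\le2$ every $y\in J_\beta$ admits at least one digit; hence the admissible prefixes form a tree $\mathcal T(x)\subseteq\Omega_\beta^{\ast}$ whose nodes branch into two children iff the associated orbit point lies in $S$, and the $\beta$-expansions of $x$ are exactly the infinite paths of $\mathcal T(x)$. In particular the number of expansions is at most $2^{\aleph_0}$.

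Next I would prove a dichotomy for a single point: either every expansion of $x$ meets $S$ infinitely often, in which case $x$ has a continuum of expansions, or some expansion meets $S$ only finitely often, in which case $x$ lies in an affine image of the univoque set $U_\beta=\{y\in J_\beta:\ y\ \text{has a unique }\beta\text{-expansion}\}$. For the first horn, if every infinite path of $\mathcal T(x)$ passes through branching nodes cofinally, then every node can be extended to a branching node (extend it to any infinite path and wait for the next visit to $S$); a tree whose branching nodes are dense in this sense contains a copy of the full binary tree $\{0,1\}^{\ast}$, obtained by repeatedly passing from a branching node to a branching node extending each of its two children, and this embedded binary tree already has $2^{\aleph_0}$ infinite paths. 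For the second horn, if an expansion $(\varepsilon_n)$ of $x$ meets $S$ only at indices below some $m$, then the orbit point $x_m$ never returns to $S$, so at every subsequent step exactly one digit is admissible; hence $x_m$ has a unique expansion, i.e. $x_m\in U_\beta$, and with $w=(\varepsilon_1,\dots,\varepsilon_m)$ we obtain
\[
x=\sum_{i=1}^{m}\frac{\varepsilon_i}{\beta^{i}}+\frac{x_m}{\beta^{m}}\in \phi_w(U_\beta),\qquad \phi_w(z):=\sum_{i=1}^{m}\frac{\varepsilon_i}{\beta^{i}}+\frac{z}{\beta^{m}}.
\]

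It then remains to collect the exceptional set. Combining the two horns, the set of $x\in(0,M)$ with fewer than $2^{\aleph_0}$ expansions is contained in $\bigcup_{m\ge0}\bigcup_{w\in\Omega_\beta^{m}}\phi_w(U_\beta)$, a countable union. Each $\phi_w$ is an affine contraction of ratio $\beta^{-m}$, hence maps $\lambda$-null sets to $\lambda$-null sets; since $\lambda(U_\beta)=0$ is the well-known fact recalled in the abstract, every $\phi_w(U_\beta)$ is null and so is the whole union. Therefore $\lambda$-almost every $x\in(0,M)$ falls in the first horn and has a continuum of $\beta$-expansions.

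The main obstacle is the input $\lambda(U_\beta)=0$: the combinatorial dichotomy above is soft and holds for every $1<\beta\le2$, so the entire analytic weight of the theorem rests on the univoque set being Lebesgue-null. For $\beta<\frac{1+\sqrt5}{2}$ this is trivial because $U_\beta=\{0,M\}$, and Theorem \ref{EJK90} in fact upgrades the conclusion from ``almost every'' to ``every'' interior point; the restriction $\beta\ge\frac{1+\sqrt5}{2}$ in the statement is exactly the range where $U_\beta$ acquires interior points, so ``almost every'' can no longer be improved to ``every''. A secondary point requiring care is the perfect-subtree construction: one must verify that the map $\{0,1\}^{\ast}\to\mathcal T(x)$ into the chosen branching nodes is injective on infinite paths, which is immediate since two distinct binary strings first differ at some coordinate and are there routed through the two distinct children of a common branching node.
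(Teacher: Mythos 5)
This statement is imported from \cite{Sidorov-AMM2003}; the paper gives no proof of it (indeed it immediately uses Theorems \ref{EJK90} and \ref{Sidorov03} to \emph{deduce} that $\lambda(\mathcal{U}_\beta)=0$), so your attempt can only be judged on its own terms. Your combinatorial skeleton is sound: the tree of admissible prefixes, the observation that branching happens exactly on $S=\Delta_\beta$, the perfect-subtree construction in the first horn, and the identification $x\in\phi_w(\mathcal{U}_\beta)$ in the second horn are all correct, and this is genuinely close in spirit to Sidorov's actual argument.

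The gap is the one you yourself flag and then wave away: you take $\lambda(\mathcal{U}_\beta)=0$ as ``the well-known fact recalled in the abstract.'' But in this paper that fact is stated as a \emph{consequence} of Theorems \ref{EJK90} and \ref{Sidorov03} (see the sentence ``Together with Theorem \ref{EJK90} and Theorem \ref{Sidorov03}, we know that $\lambda(\mathcal{U}_{\beta})=0$''), so invoking it to prove Theorem \ref{Sidorov03} is circular as written. Your reduction shows the theorem is equivalent, modulo soft combinatorics, to $\lambda(\mathcal{U}_\beta)=0$; you must therefore prove that input independently. The standard non-circular route is ergodic-theoretic: by R\'enyi \cite{Renyi-AMH1957}, the greedy map $T_\beta$ on $[0,1)$ admits an invariant probability measure equivalent to Lebesgue with respect to which it is ergodic; for $\beta<2$ the set $\Delta_\beta\cap[0,1)\supseteq[1/\beta,\min(1,1/\beta(\beta-1)))$ is a nondegenerate interval, hence of positive measure, so by ergodicity (Birkhoff, or Poincar\'e recurrence applied to an ergodic component) $\lambda$-almost every orbit enters $\Delta_\beta$; combined with Proposition \ref{Prop: 1} and the fact that points of $[1,1/(\beta-1)]$ eventually enter $[0,1)$ under $G_\beta$, this yields $\lambda(\mathcal{U}_\beta)=0$. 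With that paragraph inserted, your proof closes; without it, the entire analytic content of the theorem is assumed rather than proved. (A very minor further point: your parenthetical that $\mathcal{U}_\beta=\{0,M\}$ below the golden ratio is about the \emph{unique-expansion} set in the sense including endpoints, whereas the paper's Theorem 1.3 calls that set empty; this is an endpoint convention issue and does not affect your argument.)
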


Define a \emph{projection map} $\pi _{\beta }:\Omega _{\beta }\rightarrow
J_{\beta }$ as 
\begin{equation*}
\pi _{\beta }(w)=\sum_{n=1}^{\infty }\frac{w_{n}}{\beta ^{n}}
\end{equation*}%
for $w=(w_{1},\cdots ,w_{n},\cdots )\in \Omega _{\beta }$. Then $\#\pi
_{\beta }^{-1}(x)$ is the number of $\beta $-expansions of $x\in J_{\beta }$%
, here $\#$ denotes the cardinality of a finite set. Denote 
\begin{equation*}
\mathcal{U}_{\beta }=\{x\in J_{\beta }:\#\pi _{\beta }^{-1}(x)=1\},
\end{equation*}%
that is, the set of the points with unique $\beta $-expansion. The set $%
\mathcal{U}_{\beta }$ is called the \emph{univoque set}. Together with
Theorem \ref{EJK90} and Theorem \ref{Sidorov03}, we know that $\lambda (%
\mathcal{U}_{\beta })=0$ for any $1<\beta \leq 2$. Glendinning and Sidorov 
\cite{GS-MRL2001} showed a finer description on $\mathcal{U}_{\beta }$ as
the following.

\begin{theorem}[\protect\cite{GS-MRL2001}]
The set $\mathcal{U}_{\beta }$ is

\begin{itemize}
\item empty if $\beta\in (1, \frac{1+\sqrt{5}}{2}]$;

\item countable for $\beta \in (\frac{1+\sqrt{5}}{2},\beta _{\ast })$, where 
$\beta _{\ast }=1.787231650\dots $ is the Komornik-Loreti constant (see also 
\cite{KL-AMM1998});

\item an uncountable Cantor set of zero Hausdorff dimension if $%
\beta=\beta_* $; and

\item a set of positive Hausdorff dimension for $\beta \in (\beta _{\ast
},2) $.
\end{itemize}
\end{theorem}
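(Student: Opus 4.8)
The plan is to pass from the interval to symbolic dynamics, where the four regimes become statements about the cardinality and topological entropy of an associated subshift. Let $\alpha(\beta) = \alpha_1\alpha_2\cdots$ denote the \emph{quasi-greedy expansion} of $1$, that is, the lexicographically largest sequence in $\Omega_\beta$ not ending in $0^\infty$ with $\pi_\beta(\alpha(\beta)) = 1$. The first step is to establish the classical lexicographic characterization (Erd\H{o}s--Jo\'o--Komornik, de Vries--Komornik): a sequence $w \in \Omega_\beta$ is the unique expansion of $\pi_\beta(w)$ if and only if
$$\sigma^n w \prec \alpha(\beta) \ \text{ whenever } w_n = 0, \qquad \overline{\sigma^n w} \prec \alpha(\beta) \ \text{ whenever } w_n = 1,$$
where $\sigma$ is the shift and $\overline{w}$ is the coordinatewise reflection $w \mapsto 1-w$. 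This cuts $\mathcal U_\beta$ out as $\pi_\beta(\mathbf U_\beta)$, where $\mathbf U_\beta \subset \Omega_\beta$ is the shift-invariant set of admissible sequences, and $\pi_\beta$ restricts to a bijection onto $\mathcal U_\beta$. Since a cylinder of length $n$ has $\pi_\beta$-image of diameter comparable to $\beta^{-n}$, a standard covering and mass-distribution argument yields
$$\dim_H \mathcal U_\beta = \frac{h_{\mathrm{top}}(\mathbf U_\beta)}{\log \beta},$$
so the four claims reduce to counting admissible words.

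Second, I would record the monotonicity that drives everything: $\beta \mapsto \alpha(\beta)$ is strictly increasing in the lexicographic order, so the admissibility constraints relax as $\beta$ grows and $\mathbf U_\beta$, together with its cardinality and entropy, is non-decreasing in $\beta$. The two extreme regimes are then direct computations. At $\beta = \frac{1+\sqrt 5}{2}$ one has $\alpha(\beta) = (10)^\infty$; feeding this into the characterization and chasing the forced digits shows that any non-constant admissible sequence is driven toward the forbidden tail $(10)^\infty$, leaving only $0^\infty$ and $1^\infty$. Thus $\mathcal U_\beta$ meets the open interval $(0,\frac{1}{\beta-1})$ in the empty set, which is the asserted emptiness, and by monotonicity the same holds for every smaller $\beta$. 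For $\beta \in (\frac{1+\sqrt 5}{2}, \beta_*)$ the still-small $\alpha(\beta)$ forces each admissible tail to return repeatedly to a bounded, eventually periodic pattern, so $\mathbf U_\beta$ is a countable collection of (eventually periodic) sequences, giving the countable case.

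Third comes the Komornik--Loreti threshold, which is the heart of the matter. I would identify $\beta_*$ as the unique base for which $\alpha(\beta_*)$ equals the shifted Thue--Morse sequence $\tau = \tau_1\tau_2\cdots$, and exploit its self-similar substitution structure. At $\beta = \beta_*$ that substitution structure produces an uncountable family of admissible sequences, so $\mathcal U_{\beta_*}$ is an uncountable Cantor set; on the other hand the Thue--Morse sequence has sub-exponential (indeed linear) subword complexity, which forces the subword complexity of $\mathbf U_{\beta_*}$ to grow sub-exponentially as well, whence $h_{\mathrm{top}}(\mathbf U_{\beta_*}) = 0$ and $\dim_H \mathcal U_{\beta_*} = 0$. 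For $\beta > \beta_*$ one has $\alpha(\beta) \succ \tau$, and the extra lexicographic room lets me exhibit two incomparable admissible blocks that may be concatenated freely, a combinatorial horseshoe inside $\mathbf U_\beta$; this yields exponential word growth, so $h_{\mathrm{top}}(\mathbf U_\beta) > 0$ and $\dim_H \mathcal U_\beta > 0$.

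The main obstacle is precisely this threshold analysis at and just above $\beta_*$: one must show that the entropy vanishes exactly at the Komornik--Loreti constant and becomes positive immediately beyond it, with no gap. This rests on the extremal characterization of $\tau$ as the smallest quasi-greedy sequence admitting uncountably many unique expansions, together with the delicate bookkeeping that converts the linear complexity of Thue--Morse into a sub-exponential bound for $\mathbf U_{\beta_*}$ and, on the other side, converts a single strict inequality $\alpha(\beta) \succ \tau$ into a genuine free-concatenation scheme. Making $\beta \mapsto h_{\mathrm{top}}(\mathbf U_\beta)$ quantitatively continuous near $\beta_*$ is the technical crux; the cardinality statements in the first three cases follow comparatively routinely once the symbolic picture and the monotonicity are in place.
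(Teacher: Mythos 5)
This statement is quoted background: the paper attributes it to Glendinning and Sidorov \cite{GS-MRL2001} and gives no proof of its own, so there is nothing internal to compare your argument against; the comparison has to be with the standard literature proof, and your outline does follow that standard route (lexicographic characterization via the quasi-greedy expansion $\alpha(\beta)$, monotonicity in $\beta$, the Thue--Morse sequence at the Komornik--Loreti constant, and a free-concatenation argument above it).

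There are, however, two places where the reasoning as written would not go through. First, your zero-entropy argument at $\beta_*$ rests on the claim that the linear subword complexity of the Thue--Morse sequence $\tau$ ``forces'' sub-exponential complexity of the admissible set $\mathbf U_{\beta_*}$. That inference is false in general: the admissible set is not contained in the orbit closure of $\tau$, and the complexity of a lexicographic bounding sequence does not control the complexity of the set it carves out (take $\alpha = 1^\infty$, which has complexity one yet admits the full shift). The vanishing of $h_{\mathrm{top}}(\mathbf U_{\beta_*})$ genuinely requires the extremal substitution structure of $\tau$ --- for instance a direct block-counting argument using the Thue--Morse recursion, or continuity of $\beta\mapsto h_{\mathrm{top}}(\mathbf U_\beta)$ combined with vanishing below $\beta_*$ --- and neither is supplied. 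Second, the countability for $\beta\in(\tfrac{1+\sqrt5}{2},\beta_*)$ is asserted by saying the small $\alpha(\beta)$ ``forces each admissible tail to return to a bounded, eventually periodic pattern''; this is precisely the delicate combinatorial core of Glendinning--Sidorov's proof (carried out via the truncated Thue--Morse words $\mathfrak m_n$ and an induction on the scale at which $\alpha(\beta)$ first drops below them), and no mechanism is given for why admissible sequences must be eventually periodic rather than merely constrained. The remaining parts (emptiness at and below the golden ratio, the covering argument for $\dim_H\mathcal U_\beta = h_{\mathrm{top}}(\mathbf U_\beta)/\log\beta$, and the horseshoe above $\beta_*$) are correctly sketched and routine to complete.
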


Recently, Komornik, Kong and Li \cite{KKL-apa2015} gave much more
information for $\mathcal{U}_{\beta }$ for $\beta _{\ast }<\beta <2$. The
authors showed that the Hausdorff dimension of such set forms a devil's
staircase function, i.e. $D(\beta ):=\dim _{H}\mathcal{U}_{\beta }$ is
continuous, monotonic and $D^{\prime }<0$ almost everywhere. They also gave
explicit formula for the Hausdorff dimension of $\mathcal{U}_{\beta }$ when $%
\beta $ is in any admissible interval $[\beta _{L},\beta _{U}]$ (see Theorem
2.6 \cite{KL-N2015} for the definition of such interval).

Among all the $\beta $-expansions of any given number $x\in J_{\beta }$, the
maximum and minimum in the sense of lexicographical order are provided by
greedy and lazy algorithms respectively. These two algorithms can be induced
by greedy and lazy $\beta $-transformations respectively.

\begin{definition}[Greedy $\protect\beta $-transformation]
Let $1<\beta \leq 2$. The \emph{greedy }$\beta $\emph{-transformation} $%
G_{\beta }:J_{\beta }\rightarrow J_{\beta }$ is defined as 
\begin{equation*}
G_{\beta }(x)=%
\begin{cases}
\beta x,\ \ \ \  & \text{if}\ 0\leq x<\frac{1}{\beta }; \\ 
\beta x-1,\ \ \  & \text{if}\ \frac{1}{\beta }\leq x\leq \frac{1}{\beta -1}.%
\end{cases}%
\end{equation*}%
The system $(J_{\beta },G_{\beta })$ is called the \emph{greedy }$\beta $%
\emph{-transformation dynamical system}.
\end{definition}

A \emph{coding} of any $x\in J_{\beta }$ according to $G_{\beta }$ which
includes two branches can be given as follows. Define 
\begin{equation*}
\varepsilon _{1}(x,\beta )=%
\begin{cases}
0,\ \ \ \  & \text{if}\ 0\leq x<\frac{1}{\beta }; \\ 
1,\ \ \ \  & \text{if}\ \frac{1}{\beta }\leq x\leq \frac{1}{\beta -1}%
\end{cases}%
\end{equation*}%
for $x\in J_{\beta }$. That is, the first branch $[0,\frac{1}{\beta })$ of $%
G_{\beta }$ is labelled by 0 and the other one $[\frac{1}{\beta },\frac{1}{%
\beta -1})$ is labelled by 1. Denote $\varepsilon _{n}(x,\beta
):=\varepsilon _{1}(G_{\beta }^{n-1}x,\beta ).$ Then, 
\begin{equation}
x=\sum_{n=1}^{\infty }\varepsilon _{n}(x,\beta )\beta ^{-n},
\label{betaexpansion}
\end{equation}%
which is called the \emph{greedy }$\beta $\emph{-expansion} of $x\in
J_{\beta }$ and denote $\varepsilon (x,\beta )=(\varepsilon _{1}(x,\beta
),\cdots ,\allowbreak \varepsilon _{n}(x,\beta ),\cdots )$. The trapping
region of $(J_{\beta },G_{\beta })$ is $(I,T_{\beta })$, where $I=[0,1)$ and 
$T_{\beta }=G_{\beta }|_{I}$. More precisely, $T_{\beta }I=I$ and for any $%
1\leq x\leq \frac{1}{\beta -1}$, there is some $n\in \mathbb{N}$ such that $%
T_{\beta }^{n}(x)\in I$. If $1\leq x\leq \frac{1}{\beta -1}$, then $%
\varepsilon (x,\beta )=(1^{k},\varepsilon (y,\beta ))$ for some $k\geq 1$
and $y=G_{\beta }^{k}(x)\in I$.

\begin{definition}[Lazy $\protect\beta $-transformation]
The lazy $\beta $-transformation $L_{\beta }:J_{\beta }\rightarrow J_{\beta
} $ is defined as 
\begin{equation*}
L_{\beta }(x)=%
\begin{cases}
\beta x,\ \ \ \ \  & \text{if}\ 0\leq x\leq \frac{1}{\beta (\beta -1)}; \\ 
&  \\ 
\beta x-1,\ \ \ \  & \text{if}\ \frac{1}{\beta (\beta -1)}<x\leq \frac{1}{%
\beta -1}.%
\end{cases}%
\end{equation*}
\end{definition}

Besides the greedy and lazy $\beta $-expansions, other $\beta $-expansions
are called \emph{intermediate }$\beta $\emph{-expansion}, which is rich from
Theorem \ref{EJK90} and Theorem \ref{Sidorov03}. The corresponding $\beta $%
-transformations are also studied recently, for example, see \cite{KL-N2015}.

By the greedy and lazy $\beta $-transformations, the whole interval $%
J_{\beta }$ is partitioned to three parts: 
\begin{eqnarray*}
I_{0} &=&\left[ 0,1/\beta \right) \text{,}\ \Delta _{\beta }=\left[ 1/\beta
,1/\beta (\beta -1)\right] \text{, } \\
I_{1} &=&\left( 1/\beta (\beta -1),1/(\beta -1)\right] \text{.}
\end{eqnarray*}%
If $x\in I_{0}$, the first digit of both greedy and lazy expansions of $x$
are 0; since the greedy and lazy expansions are the maximum and minimum
among all $\beta $-expansions, we know that the first digit of any $\beta $%
-expansion of $x$ is 0. Similarly, if $x\in I_{1}$, then the first digit of
all $\beta $-expansions of $x$ are 1. For $x\in \Delta _{\beta }$, since the
first digit of greedy expansion is 1 and the lazy is 0, the first digits of $%
\beta $-expansions of $x$ have two possibilities.

\begin{proposition}
\label{Prop: 1}The univoque set 
\begin{eqnarray*}
\mathcal{U}_{\beta } &=&\{x\in J_{\beta }:G_{\beta }^{n}(x)\notin \Delta
_{\beta }\ \forall n\geq 0\} \\
&=&\{x\in J_{\beta }:L_{\beta }^{n}(x)\notin \Delta _{\beta }\ \forall n\geq
0\}\text{.}
\end{eqnarray*}
\end{proposition}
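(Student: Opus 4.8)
The plan is to prove the greedy characterization $\mathcal{U}_{\beta}=\{x\in J_{\beta}:G_{\beta}^{n}(x)\notin\Delta_{\beta}\ \forall n\geq0\}$ directly, and then deduce the lazy version. The starting point is the ``forced first digit'' dichotomy already recorded in the paragraph preceding the statement: a $\beta$-expansion of $y\in J_{\beta}$ may begin with the digit $0$ precisely when $y\leq 1/(\beta(\beta-1))$ (the largest value attainable with leading digit $0$), and may begin with $1$ precisely when $y\geq 1/\beta$ (the smallest value attainable with leading digit $1$). Hence the first digit of every $\beta$-expansion of $y$ is uniquely determined if and only if $y\notin\Delta_{\beta}$, in which case it necessarily equals the greedy digit $\varepsilon_{1}(y,\beta)$.

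For the inclusion $\supseteq$, suppose $G_{\beta}^{n}(x)\notin\Delta_{\beta}$ for all $n\geq0$, and let $(\delta_{n})_{n\geq1}$ be an arbitrary $\beta$-expansion of $x$. I would show by induction that $\delta_{n}=\varepsilon_{n}(x,\beta)$ for every $n$. Iterating the defining relation $G_{\beta}(z)=\beta z-\varepsilon_{1}(z,\beta)$ gives the identity $x=\sum_{k=1}^{n}\varepsilon_{k}(x,\beta)\beta^{-k}+\beta^{-n}G_{\beta}^{n}(x)$, so once the first $n$ digits of $(\delta_{k})$ agree with the greedy digits, the tail $(\delta_{n+1},\delta_{n+2},\dots)$ is a $\beta$-expansion of $G_{\beta}^{n}(x)$; since this point lies outside $\Delta_{\beta}$, the dichotomy forces $\delta_{n+1}=\varepsilon_{n+1}(x,\beta)$. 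The base case $n=0$ is the same statement applied to $x$ itself. Thus $x$ has a unique expansion, i.e. $x\in\mathcal{U}_{\beta}$.

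For the reverse inclusion I would argue by contraposition: assume $y:=G_{\beta}^{N}(x)\in\Delta_{\beta}$ for some $N\geq0$. Because $y\in\Delta_{\beta}$ we have $y\leq 1/(\beta(\beta-1))$, so $y$ admits a $\beta$-expansion $(\eta_{n})_{n\geq1}$ with $\eta_{1}=0$ (for instance the lazy one, since $L_{\beta}$ assigns digit $0$ throughout $\Delta_{\beta}$). Using $x=\sum_{k=1}^{N}\varepsilon_{k}(x,\beta)\beta^{-k}+\beta^{-N}y$, the spliced sequence $(\varepsilon_{1}(x,\beta),\dots,\varepsilon_{N}(x,\beta),\eta_{1},\eta_{2},\dots)$ is a genuine $\beta$-expansion of $x$. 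Its $(N+1)$-st digit is $\eta_{1}=0$, whereas the greedy expansion has $(N+1)$-st digit $\varepsilon_{N+1}(x,\beta)=\varepsilon_{1}(y,\beta)=1$, because the greedy map assigns $1$ throughout $\Delta_{\beta}$. These two expansions differ, so $x\notin\mathcal{U}_{\beta}$. This completes the greedy identity.

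Finally, to obtain the lazy characterization I would exploit the fact that $G_{\beta}$ and $L_{\beta}$ coincide on $I_{0}\cup I_{1}=J_{\beta}\setminus\Delta_{\beta}$ and differ only on $\Delta_{\beta}$. Consequently, if the greedy orbit of $x$ never meets $\Delta_{\beta}$ then $G_{\beta}^{n}(x)=L_{\beta}^{n}(x)$ for all $n$ by a one-line induction, and symmetrically for the lazy orbit; this gives $\{x:G_{\beta}^{n}(x)\notin\Delta_{\beta}\ \forall n\}=\{x:L_{\beta}^{n}(x)\notin\Delta_{\beta}\ \forall n\}$. (Alternatively the lazy identity can be proved verbatim by the symmetric argument, interchanging the roles of leading digit $0$ and leading digit $1$.) I expect the only point requiring care to be the reverse inclusion—verifying that the spliced sequence genuinely projects to $x$ and is lexicographically distinct from the greedy expansion—rather than any delicate estimate; the forward direction and the greedy/lazy comparison are routine inductions.
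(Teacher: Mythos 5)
Your proposal is correct and follows essentially the same route as the paper, which offers no formal proof but justifies the proposition via the same first-digit dichotomy on $I_{0}$, $\Delta_{\beta}$, $I_{1}$ discussed in the paragraph preceding the statement. Your write-up simply makes that sketch rigorous (induction on digits for one inclusion, a spliced expansion for the other, and the observation that $G_{\beta}$ and $L_{\beta}$ agree off $\Delta_{\beta}$), and all steps check out.
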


This proposition tells us that the set $\mathcal{U}_{\beta }$ consists of
the points whose orbit under $G_{\beta }$ or $L_{\beta }$ will never fall in
the hole $\Delta _{\beta }$. So $\mathcal{U}_{\beta }$ is regarded as a
problem of \emph{open system} (\emph{dynamical system with hole }and \emph{%
exclusion systems}) and we just need to focus on the greedy $\beta $%
-transformation $G_{\beta }$.

\subsection{History and motivation: open systems and escape rate}

Proposition \ref{Prop: 1} reveals that the study of the topological
properties of $\mathcal{U}_{\beta }$ is equivalent to the study of \emph{%
open systems}. Such systems have been studied extensively by physicists and
mathematicians in many aspects. Main questions in open systems are: how do
typical points of the phase space escape from a given hole, what is the
speed of escape, and which hole is leaking the most (see \cite%
{KL-JoSP2009,AB-N2010, BY-IJoM2011, DWY-CiMP2010, DY-N2006})? Escape rate is
introduced to measure such quantity and we define such rate in our setting.
Denote 
\begin{eqnarray*}
\widetilde{\Gamma }_{\beta ,n} &=&\widetilde{\Gamma }_{n}(\Delta _{\beta })
\\
&=&\{x\in J_{\beta }:G_{\beta }^{n}(x)\in \Delta _{\beta }\text{ and }%
G_{\beta }^{k}(x)\notin \Delta _{\beta }\text{ for }0\leq k\leq n-1\}.
\end{eqnarray*}%
Since the trapping region of $(J_{\beta },G_{\beta })$ is $(I,T_{\beta })$,
we can define 
\begin{eqnarray*}
\Gamma _{\beta ,n} &=&\Gamma _{n}(\Delta _{\beta }) \\
&=&\{x\in I:T_{\beta }^{n}(x)\in \Delta _{\beta }\cap I\text{ and }T_{\beta
}^{k}(x)\notin \Delta _{\beta }\cap I\text{ for }0\leq k\leq n-1\}.
\end{eqnarray*}%
It is clear that $\mathcal{U}_{\beta }\cap I=\cap _{n=1}^{\infty
}I\backslash \Gamma _{n}(\Delta _{\beta })$. Since $\lambda (\mathcal{U}%
_{\beta })=0$, we have $\lim\nolimits_{n\rightarrow \infty }\lambda
(I\backslash \Gamma _{n}(\Delta _{\beta }))=0$. (Recall that $\lambda $ is
the Lebesgue measure on $\mathbb{R}$.) Note that $\Gamma _{n+1}(\Delta
_{\beta })\supset \Gamma _{n}(\Delta _{\beta })$ for all $n\geq 0$ , the
following limit exists and we can define the \emph{escape rate} as follows 
\begin{equation*}
e_{\beta }=\lim_{n\rightarrow \infty }\frac{-\log \lambda (\Gamma _{\beta
,n})}{n}\text{ and }E_{\beta }=\lim_{n\rightarrow \infty }\frac{-\log
\lambda (\Gamma _{\beta ,n})}{n\log \beta }=\frac{e_{\beta }}{\log \beta }%
\text{.}
\end{equation*}%
Here we also call $E_{\beta }$ the \emph{escape rate} if it causes no
confusion. The corresponding escape rate $\widetilde{e}_{\beta }$ and $%
\widetilde{E}_{\beta }$ are defined similarly for $\widetilde{\Gamma }%
_{n}(\Delta _{\beta })$.

The aim of this paper is to calculate the rate $e_{\beta }$ and describe how 
$e_{\beta }$ changes as $\beta $ varies. It is worth pointing out that such
a problem has been raised by Bundfuss, Kr\"{u}ger and Troubetzkoy (p.23, 
\cite{BKT-ETaDS2011}).

\begin{quotation}
\textit{One would like to develop a relationship between the escape rate
properties and topological and/or metric invariants of the invariant set.}
\end{quotation}

We emphasize that the central problem of open systems is how to estimate the
escape rate and how the escape rate varies when the hole is shrinking to
zero. Our problem is a little bit different since the map $T_{\beta }$ also
changes with respect to $\beta $ in $(1,2]$. The following is the main
result of this investigation.

\begin{theorem}
\label{Thm: Main}Let $1<\beta \leq 2$. Then $\dim _{H}\mathcal{U}_{\beta
}+E_{\beta }=1$.
\end{theorem}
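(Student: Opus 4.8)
The plan is to reduce both quantities to a single combinatorial count and to exploit that $T_{\beta}$ has constant slope $\beta$ on $I=[0,1)$. Write $S_{n}:=I\setminus \Gamma_{n}(\Delta _{\beta})$ for the set of points whose $T_{\beta}$-orbit avoids the hole $\Delta _{\beta}\cap I$ for the first $n$ steps, so that $\lambda (S_{n})\downarrow 0$ and $e_{\beta}$ is the exponential decay rate of $\lambda(S_n)$. The set $S_{n}$ is a disjoint union of $N_{n}$ surviving cylinders, each of which is an interval on which $T_{\beta}^{n}$ is affine with slope $\beta ^{n}$; hence every surviving cylinder has length between $c\beta ^{-n}$ and $\beta ^{-n}$. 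Summing yields $\lambda (S_{n})\asymp N_{n}\,\beta ^{-n}$, and therefore
\begin{equation*}
E_{\beta}=\lim_{n\to\infty}\frac{-\log \lambda(S_{n})}{n\log \beta}=1-\lim_{n\to\infty}\frac{\log N_{n}}{n\log \beta}=:1-d .
\end{equation*}
So the whole theorem reduces to proving that the exponential growth rate $d$ of the surviving-cylinder count equals $\dim _{H}\mathcal{U}_{\beta}$.

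For the dimension I would use the same cover twice. The $N_{n}$ surviving cylinders cover $\mathcal{U}_{\beta}\cap I$ and have diameter $\le \beta ^{-n}$, so $\sum |\text{cell}|^{s}\le N_{n}\beta ^{-ns}$ forces $\dim _{H}\mathcal{U}_{\beta}\le d$. For the matching lower bound I would place a Parry-type (maximal-entropy) measure on the survivor subshift, push it to $\mathcal{U}_{\beta}$ by $\pi _{\beta}$, and apply the mass distribution principle: because all level-$n$ cylinders are comparable in size, the local dimension is constant and equals $d$. This is precisely the constant-slope instance of Bowen's formula for a conformal repeller, $\dim _{H}\mathcal{U}_{\beta}=h_{\mathrm{top}}(\mathcal{U}_{\beta})/\log \beta$, and it matches the formula of Komornik, Kong and Li \cite{KKL-apa2015}.

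The hard part is that $N_{n}$ counts \emph{every} cylinder surviving $n$ steps, including transient cylinders that eventually fall into $\Delta _{\beta}$, whereas $\dim _{H}\mathcal{U}_{\beta}$ is driven only by cylinders meeting the set of \emph{eternal} survivors; one must show these two counts share the same exponential rate, so that $d=h_{\mathrm{top}}(\mathcal{U}_{\beta})/\log \beta$. I would settle this first on the dense subclass of $\beta$ for which the quasi-greedy expansion of $1$ is eventually periodic: there the de Vries--Komornik lexicographic characterization of $\mathcal{U}_{\beta}$ makes the survivor subshift a subshift of finite type with an irreducible transition matrix, Perron--Frobenius gives $N_{n}\sim C\rho ^{n}$ with $\log \rho =h_{\mathrm{top}}(\mathcal{U}_{\beta})$, the transient and boundary-truncated cylinders contribute only a bounded factor, and the uniform lower bound $c\beta ^{-n}$ on cylinder lengths is genuine. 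This proves $d=\dim _{H}\mathcal{U}_{\beta}$, hence the theorem, on that subclass.

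Finally I would extend to all $1<\beta \le 2$ by approximation, which is where the staircase structure does the work: the count $N_{n}=N_{n}(\beta)$ is monotone in $\beta$ through the nested holes $\Delta _{\beta}$, so $E_{\beta}=1-d(\beta)$ is monotone, and combining this with the continuity and monotonicity of $D(\beta)=\dim _{H}\mathcal{U}_{\beta}$ from \cite{KKL-apa2015} propagates the identity $\dim _{H}\mathcal{U}_{\beta}+E_{\beta}=1$ from the dense subclass to every $\beta$. The boundary value $\beta=\tfrac{1+\sqrt 5}{2}$ is the degenerate check: the hole then fills the entire upper branch, only $x=0$ survives, so $N_{n}=1$, $\lambda (S_{n})=\beta ^{-n}$, $\dim _{H}\mathcal{U}_{\beta}=0$ and $E_{\beta}=1$, consistent with the formula.
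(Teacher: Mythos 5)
Your route is genuinely different from the paper's: you fix the combinatorics (counting surviving cylinders $N_n$ and using the constant slope to convert counts into measure), prove the identity on the dense class of $\beta$ whose survivor subshift is an irreducible SFT, and then move $\beta$ itself through that dense class, importing continuity and monotonicity of $D(\beta)=\dim_H\mathcal{U}_\beta$ from \cite{KKL-apa2015}. The paper instead keeps $\beta$ fixed throughout: for $\beta\in\mathcal{F}$ it applies the Afraimovich--Bunimovich asymptotics $\lambda(\Gamma_{\beta,n})\simeq Q_0(n)\rho_{B_\beta^-}^n\lambda(\Delta_\beta)$ to the reduced transfer matrix, and for $\beta\in\mathcal{N}$ it enlarges the hole to $\Delta_l=(a_l,b)$ with $a_l\uparrow 1/\beta$ chosen so that each $\Sigma_l^*$ is an SFT, then passes to the limit via the entropy-of-unions lemma. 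Your SFT case is essentially a hands-on version of their $\mathcal{F}$ case and is fine up to standard bookkeeping (irreducibility of the transition matrix is not automatic --- the paper invokes Theorem 6.4 of \cite{BKT-ETaDS2011} to isolate the single nontrivial transitive component --- but reducibility only costs a polynomial factor, which does not affect the rate).

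The extension step, however, has two concrete gaps. First, your reduction $E_\beta=1-d$ rests on the two-sided estimate $\lambda(S_n)\asymp N_n\beta^{-n}$. The upper bound is free, but the lower bound needs a definite proportion of surviving cylinders to have length at least $c\beta^{-n}$; this is genuine when the survivor set carries a Markov partition (each allowed cylinder maps onto a full partition atom under $T_\beta^n$), but for general $\beta$ the second branch of $T_\beta$ is not full and cylinders can be truncated arbitrarily badly, so you have no a priori lower bound on $\lambda(S_n)$ in terms of $N_n$. Since your final squeeze uses $E_\beta=1-d(\beta)$ at the \emph{general} $\beta$, not just on the dense class, this is a real hole: you get $E_\beta\geq 1-D(\beta)$ from the covering direction, but the reverse inequality is exactly the missing measure lower bound. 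Second, you assert that $N_n(\beta)$ is monotone in $\beta$ ``through the nested holes $\Delta_\beta$,'' but the holes are not nested: as $\beta$ varies both the map $T_\beta$ and the interval $[1/\beta,1/\beta(\beta-1)]$ move (this is precisely the difficulty the paper flags in the introduction, and the reason it fixes $\beta$ and shrinks the hole instead). Monotonicity of the surviving word count is in fact true, but it must be extracted from the Parry/de Vries--Komornik lexicographic characterizations, and you invoke these only by name. Both gaps are fillable --- the first by working with the Markov approximations at the fixed $\beta$ (which is what the paper does), the second by an explicit lexicographic lemma --- but as written the passage from the dense subclass to all $\beta$ is not proved.
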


\begin{corollary}
\label{cordl} Let $1<\beta \leq 2$.

\begin{enumerate}
\item If $1<\beta \leq \beta _{\ast }$, then $E_{\beta }=1$ and $e_{\beta
}=\log \beta $.

\item If $\beta _{\ast }<\beta \leq 2$, then $E_{\beta }$ forms a devil
staircase function (i.e., continuous, monotonic, and the derivatives of $%
E_{\beta }$ with respect to $\beta $ are large than zero almost everywhere).
Moreover, $\lim\nolimits_{\beta \rightarrow \beta ^{\ast }}E_{\beta }=1$ and 
$E_{2}=0$.

\item Let $[\beta _{L,}\beta _{U}]$ be the admissible interval generated by
a block $t_{1}\cdots t_{p}$ (see \cite{KL-N2015}). For $\beta \in \lbrack
\beta _{L,}\beta _{U}]$, the escape rate $E_{\beta }$ is given by 
\begin{equation*}
E_{\beta }=1-\frac{h_{top}(Z_{t_{1}\cdots t_{p}})}{\log \beta },
\end{equation*}%
where $h_{top}(Z_{t_{1}\cdots t_{p}})$ is the entropy of the subshift of
finite type%
\begin{equation}
Z_{t_{1}\cdots t_{p}}:=\{(d_{i}):\overline{t_{1}\cdots t_{p}}\leq
d_{n}\cdots d_{n+p-1}\leq t_{1}\cdots t_{p},\text{ }n\geq 1\}  \label{2}
\end{equation}
and $\overline{\ell} := 1 - \ell$.
\end{enumerate}
\end{corollary}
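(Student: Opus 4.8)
The plan is to obtain the entire corollary from Theorem~\ref{Thm: Main} by rewriting it as
\begin{equation*}
E_{\beta}=1-D(\beta),\qquad D(\beta):=\dim_{H}\mathcal{U}_{\beta},
\end{equation*}
and then feeding in the known structural facts about the dimension function $D$. Since this relation is affine, every property of $D$ transfers at once to $E_{\beta}$; no dynamical work beyond the main theorem is needed, and the three parts reduce to locating the right statements about $D$ in the literature and transcribing them.

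For part (1), I would quote the Glendinning--Sidorov theorem \cite{GS-MRL2001} recalled above: for $1<\beta\leq\beta_{\ast}$ the univoque set $\mathcal{U}_{\beta}$ is empty, countable, or a zero-dimensional Cantor set, so in every case $D(\beta)=0$. Theorem~\ref{Thm: Main} then gives $E_{\beta}=1$, and since $E_{\beta}=e_{\beta}/\log\beta$ by definition, this forces $e_{\beta}=\log\beta$.

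For part (2), I would invoke the Komornik--Kong--Li result \cite{KKL-apa2015} that on $(\beta_{\ast},2)$ the map $\beta\mapsto D(\beta)$ is a devil's staircase: continuous, monotone, with $D'<0$ almost everywhere. Passing through $E_{\beta}=1-D(\beta)$, continuity is preserved, the sense of monotonicity is reversed, and the relation $E_{\beta}'=-D'(\beta)$ turns $D'<0$ into $E_{\beta}'>0$ almost everywhere, so $E_{\beta}$ is again a devil's staircase. For the endpoints I would use continuity together with the two boundary dimensions alone: $D(\beta_{\ast})=0$ yields $\lim_{\beta\to\beta_{\ast}}E_{\beta}=1$, while at $\beta=2$ the hole $\Delta_{\beta}$ degenerates to the single point $\{1/2\}$, so $\mathcal{U}_{2}$ is $[0,1]$ minus a countable set and $D(2)=1$, whence $E_{2}=0$.

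For part (3), I would substitute the explicit admissible-interval formula. By \cite{KKL-apa2015,KL-N2015}, on an admissible interval $[\beta_{L},\beta_{U}]$ generated by a block $t_{1}\cdots t_{p}$ one has
\begin{equation*}
D(\beta)=\frac{h_{top}(Z_{t_{1}\cdots t_{p}})}{\log\beta},
\end{equation*}
with $Z_{t_{1}\cdots t_{p}}$ the lexicographic subshift of finite type of \eqref{2}; inserting this into $E_{\beta}=1-D(\beta)$ gives the stated formula. The only genuine obstacle in the whole argument lies here, and it is bookkeeping rather than analysis: I must check that, after accounting for the normalization of admissible intervals in \cite{KL-N2015}, the subshift appearing in that paper's dimension formula is exactly the two-sided SFT $Z_{t_{1}\cdots t_{p}}$ defined in \eqref{2}, so that $h_{top}(Z_{t_{1}\cdots t_{p}})$ denotes the same number in both places. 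Once this identification is in hand, all three parts follow directly from Theorem~\ref{Thm: Main}.
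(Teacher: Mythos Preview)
Your proposal is correct and follows essentially the same route as the paper: both derive all three parts by writing $E_{\beta}=1-\dim_{H}\mathcal{U}_{\beta}$ from Theorem~\ref{Thm: Main} and then quoting the known facts about $\dim_{H}\mathcal{U}_{\beta}$ (Glendinning--Sidorov for part~(1), the devil's-staircase theorem of \cite{KKL-apa2015} for part~(2), and the admissible-interval formula of \cite{KL-N2015} for part~(3)). Your write-up is somewhat more detailed in spelling out the affine transfer and the endpoint values, but the argument is the same.
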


Some related results are also addressed herein. In \cite{FS-MfuM2011}, Feng
and Sidorov considered the \emph{growth rate} of the points having a
continuum of $\beta $-expansions, which is somehow a kind of duality of
escape rate we study here. Ban \emph{et al.} \cite{BHL-IJBCASE2003}
considered a unimodal map with a symmetric hole in the middle and study the
topological entropy of those points whose orbits never fall in the hole
under iteration. The authors showed that the entropy function forms a
devil's staircase function with respect to the size of the hole. However,
the constant part of such function are not completely characterized.
Misiurewicz \cite{Misiurewicz-IJBCASE2004} also provided an topological
proof for the same result. If $\beta =2$, Barrera \cite{Barrera-apa2013} put
a symmetry hole about $1/2$ and show that the entropy function forms a
devil's staircase with respect to the the size of the hole, and the author
completely characterized the constant part of the entropy (also called the $%
\emph{entropy}$ $\emph{pleateau}$). The topics of the transitive components
of the open systems are discussed in (\cite%
{Barrera-apa2013,Barrera-2014,BKT-ETaDS2011}).

Section 2 is devoted to the proof of Theorem \ref{Thm: Main}.

\section{Proof of Theorem \protect\ref{Thm: Main}}

Before proving the main theorem, we provide some useful materials on open
systems. Let $f:M\rightarrow M$ be a map which admits a Markov partition $%
\mathcal{P}$. We denote by $\pi :\Sigma ^{\mathcal{P}}\rightarrow M$ the
corresponding coding map and $\Sigma ^{\mathcal{P}}$ the corresponding
symbolic space with respect to $\mathcal{P}$. Fix an open hole $H\subset M$,
set $\Lambda ^{\ast }=\Lambda _{H}^{\ast }$ the invariant set of points
whose orbits never fall in the hole under $f$. Denote by $\Sigma ^{\ast
}=\Sigma _{H}^{\ast }=\pi ^{-1}\Lambda ^{\ast }$ the preimage of $\Lambda
^{\ast }$ under $\pi $ and denote by $\sigma $ the shift map on $\Sigma
^{\ast }$. Let $\partial H$ be the boundary of $H$. The following result
shows that once the boundary points fall in the gap $H$ under iteration of $%
f $, then $\Sigma ^{\ast }$ is a subshift of finite type.

\begin{proposition}[Proposition 4.1 \protect\cite{BKT-ETaDS2011}]
\label{Prop: 3}If for each $x\in \partial H$ there is an $i$ such that $%
f^{i}x\in H$, then $\Sigma ^{\ast }$ is a SFT.
\end{proposition}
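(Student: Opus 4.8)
The plan is to show that the hypothesis forces the constraint ``avoid $H$ under all iterates'' to collapse into finitely many forbidden words, which is exactly the SFT property. First I would upgrade the open hole to its closure. If $w\in\Sigma^{\ast}$, then $f^{n}(\pi w)\notin H$ for every $n\ge 0$; should some iterate $y=f^{k}(\pi w)$ lie on $\partial H$, the hypothesis supplies an $i$ with $f^{i}y\in H$, i.e. $f^{k+i}(\pi w)\in H$, contradicting $w\in\Sigma^{\ast}$. Hence the orbit of every point coded by $\Sigma^{\ast}$ avoids the \emph{closed} hole $\overline{H}$, so that $\Sigma^{\ast}=\pi^{-1}\{x:f^{n}x\notin\overline{H}\ \forall n\ge 0\}$. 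This step removes the boundary ambiguities of the coding and lets me work with a closed forbidden region. Next I would extract a uniform absorption time: since $\partial H$ is compact and each $f^{-i}(H)$ is open, the hypothesis gives a cover $\partial H\subseteq\bigcup_{i\ge 1}f^{-i}(H)$, so by compactness $\partial H\subseteq\bigcup_{i=1}^{N}f^{-i}(H)$ for some $N$. Thus every boundary point enters $H$ within $N$ steps, and its forward orbit meets the complement of $H$ in only finitely many points; in the interval setting of interest $\partial H$ consists of the two endpoints of the hole, so the union $S$ of these pre-absorption orbit segments is a finite set.

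The main construction is to refine the Markov partition $\mathcal{P}$ by cutting along $S$. The refined partition $\mathcal{P}'$ is still finite, with grid $\partial\mathcal{P}'=\partial\mathcal{P}\cup S$, and I claim it is Markov for the surviving dynamics. By the Markov property of $\mathcal{P}$ one has $f(\partial\mathcal{P})\subseteq\partial\mathcal{P}$, while each point of $S$ is mapped by $f$ either to the next point of its orbit (still in $S$) or into $H$, where it is deleted as part of the hole; hence $f(S)\subseteq S\cup H$. Combining these, $f(\partial\mathcal{P}')\subseteq\partial\mathcal{P}'\cup H$, so modulo the hole the grid is forward invariant. Therefore the coding of the survivor set by $\mathcal{P}'$ obeys a one-step transition rule on a finite alphabet, and rewriting this rule as a finite list of forbidden two-blocks exhibits $\Sigma^{\ast}$ as a subshift of finite type.

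The hard part is the Markov step: one must verify that adjoining the boundary orbit $S$ to $\mathcal{P}$ yields a \emph{finite} forward-invariant grid rather than triggering an unbounded cascade of new cut points under iteration. This is precisely where the hypothesis is indispensable, for it is the absorption of $\partial H$ into $H$ after at most $N$ steps that keeps $S$ finite and prevents its images from generating fresh cut points outside $\partial\mathcal{P}'\cup H$; without absorption the refinement would not terminate and $\Sigma^{\ast}$ would in general be only sofic. I would also check the routine compatibility conditions — that the new cut points preserve the injectivity of $\pi$ on cylinders of $\mathcal{P}'$ and that the deleted, hole-bound boundary points contribute no surviving codes — but these follow directly from the reduction to $\overline{H}$ carried out in the first step.
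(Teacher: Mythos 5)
The paper does not prove this proposition at all: it is imported verbatim as Proposition~4.1 of \cite{BKT-ETaDS2011}, so there is no in-paper argument to compare against. Judged on its own, your outline follows the standard route (and, as far as I recall, essentially the route of \cite{BKT-ETaDS2011} itself): upgrade the hole to its closure using the hypothesis, absorb $\partial H$ into $H$ in a uniformly bounded number $N$ of steps, adjoin the finite pre-absorption orbit $S$ of $\partial H$ to the Markov grid, and observe that the refined grid satisfies $f(\partial\mathcal{P}')\subseteq\partial\mathcal{P}'\cup H$, so the surviving dynamics is Markov over a finite alphabet. Two small remarks: the compactness argument for a uniform $N$ needs $f^{-i}(H)$ to be open, i.e.\ $f$ continuous, which fails for maps like $T_{\beta}$; but since $\partial H$ is finite in the interval setting this is harmless, as you note.

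The one genuine soft spot is the last step. The object you produce is the one-step SFT coding the survivor set with respect to the \emph{refined} partition $\mathcal{P}'$, whereas $\Sigma^{\ast}=\pi^{-1}\Lambda^{\ast}$ is by definition the coding with respect to the \emph{original} partition $\mathcal{P}$. Passing from the former to the latter is a one-block letter-merging map, and the image of an SFT under a one-block factor code is in general only sofic, so ``rewriting this rule as a finite list of forbidden two-blocks'' does not by itself exhibit $\Sigma^{\ast}$ as an SFT. The repair is to check that this merging map is actually a topological conjugacy on the survivor subshift: both codings are injective on $\Lambda^{\ast}$ (up to the usual negligible boundary identifications), and since survivors avoid $\overline{H}$ while every cut point of $S$ eventually enters $H$, the $\mathcal{P}'$-symbol of a surviving point is determined by a finite window of its $\mathcal{P}$-itinerary; by Curtis--Hedlund--Lyndon the inverse is then a sliding block code, and a subshift conjugate to an SFT is an SFT. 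With that paragraph added, your argument is complete.
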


\begin{remark}
\label{Rmk: 1}

\begin{enumerate}
\item It is worth noting that $\Delta _{\beta }$ is not open. However, the
computation of the Hausdorff dimension and escape rate of $\mathcal{U}_{q}$
are not affected if we substitute $\Delta _{\beta }=(1/\beta ,1/\beta (\beta
-1))$ since $\overline{\mathcal{U}}_{q}\backslash \mathcal{U}_{q}$ is
countable \cite{KL-JoNT2007}, where $\overline{\mathcal{U}}_{q}$ is the
closure of $\mathcal{U}_{q}$. Thus we define $\Delta _{\beta }$ as such an
open interval in what follows.

\item We point out that there is an analogous result of Proposition \ref%
{Prop: 3} in IFS setting (Theorem 2.4, \cite{BDJ-apa2014}). The authors in 
\cite{BDJ-apa2014} also show that the set of points whose orbits fall in
holes is of full Lebesgue measure (Corollary 3.1, \cite{BDJ-apa2014}).
\end{enumerate}
\end{remark}

The following simple proposition reveals that the escape rate are the same
under the dynamical systems $(J_{\beta },G_{\beta })$ and $(I,T_{\beta })$,
and the proof is omitted.

\begin{proposition}
Let $1<\beta \leq 2$. Then $e_{\beta }=\widetilde{e}_{\beta }\ \ \ $and$\ \
\ E_{\beta }=\widetilde{E}_{\beta }$.
\end{proposition}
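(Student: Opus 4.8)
The plan is to compare the two families of sets directly and show their Lebesgue measures decay at the same exponential rate, so the defining limits coincide. Write $a_n=\lambda(\Gamma_{\beta,n})$ and $\widetilde a_n=\lambda(\widetilde\Gamma_{\beta,n})$. The structural fact I would exploit is that $T_\beta$ maps $I=[0,1)$ into itself, so for every $x\in I$ one has $G_\beta^k(x)=T_\beta^k(x)\in I$ for all $k\ge 0$; hence $G_\beta^k(x)\in\Delta_\beta$ if and only if $T_\beta^k(x)\in\Delta_\beta\cap I$, which yields
\begin{equation*}
\widetilde\Gamma_{\beta,n}\cap I=\Gamma_{\beta,n},\qquad\text{so}\qquad a_n\le\widetilde a_n .
\end{equation*}
Since $a_n\le\widetilde a_n$ gives $-\log\widetilde a_n/n\le-\log a_n/n$, letting $n\to\infty$ yields $\limsup_{n}\bigl(-\log\widetilde a_n/n\bigr)\le e_\beta$. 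It remains to control the extra mass $\widetilde\Gamma_{\beta,n}\setminus I=\widetilde\Gamma_{\beta,n}\cap[1,1/(\beta-1)]$ coming from the entry region and to prove the matching lower bound.

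I would first record the elementary bound $e_\beta\le\log\beta$. On the cylinder $[0,\beta^{-n})$ the map acts as $T_\beta^n(x)=\beta^n x$, carrying it affinely onto $I$; moreover for $0\le k\le n-1$ one has $T_\beta^k(x)=\beta^k x\in[0,\beta^{-1})=[0,1/\beta)$, hence outside $\Delta_\beta$. Therefore the preimage of $\Delta_\beta\cap I$ inside this cylinder lies in $\Gamma_{\beta,n}$ and has measure $\beta^{-n}\lambda(\Delta_\beta\cap I)$, so $a_n\ge\beta^{-n}\lambda(\Delta_\beta\cap I)$ and thus $e_\beta\le\log\beta$.

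Next I would use a first-entry decomposition on the entry region. For $x\in[1,1/(\beta-1)]$ let $j=\tau(x)\ge 1$ be the first time $G_\beta^j(x)\in I$; then $G_\beta^j(x)=\beta^j x-(\beta^j-1)/(\beta-1)=:\psi_j(x)$, with all intermediate iterates in $[1/\beta,1/(\beta-1)]$ carrying digit $1$. The map $\psi_j$ is affine with $|\psi_j'|=\beta^j$ and sends $E_j:=\{\tau=j\}\cap[1,1/(\beta-1)]$ into $I$. In the main range $\beta\ge(1+\sqrt5)/2$ we have $\Delta_\beta\subset I$, so no iterate of the entry phase meets the hole; thus $x\in\widetilde\Gamma_{\beta,n}$ forces $n\ge j$ and $\psi_j(x)\in\Gamma_{\beta,n-j}$, whence $\lambda(\widetilde\Gamma_{\beta,n}\cap E_j)\le\beta^{-j}\lambda(\Gamma_{\beta,n-j})=\beta^{-j}a_{n-j}$. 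Summing over the disjoint pieces $E_j$, $1\le j\le n$, and adding the contribution $a_n$ from $I$ gives the convolution bound
\begin{equation*}
\widetilde a_n\le\sum_{j=0}^{n}\beta^{-j}a_{n-j}.
\end{equation*}

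Finally I would feed $e_\beta\le\log\beta$ into this bound. Fix $\epsilon>0$; since $-\log a_m/m\to e_\beta$, we have $a_m\le e^{-(e_\beta-\epsilon)m}$ for all large $m$ (the finitely many small-index terms being absorbed into the polynomial factor below), so
\begin{equation*}
\widetilde a_n\le\sum_{j=0}^{n}\beta^{-j}e^{-(e_\beta-\epsilon)(n-j)}=e^{-(e_\beta-\epsilon)n}\sum_{j=0}^{n}e^{(e_\beta-\epsilon-\log\beta)j}\le (n+1)\,e^{-(e_\beta-\epsilon)n},
\end{equation*}
where the last step uses $e_\beta-\log\beta\le 0$. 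Hence $-\log\widetilde a_n/n\ge(e_\beta-\epsilon)-\log(n+1)/n$, and letting $n\to\infty$ then $\epsilon\to0$ gives $\liminf_{n}\bigl(-\log\widetilde a_n/n\bigr)\ge e_\beta$. Combined with the first paragraph, the limit exists and $\widetilde e_\beta=e_\beta$; dividing by $\log\beta$ gives $\widetilde E_\beta=E_\beta$. The main obstacle is the bookkeeping of the first-entry decomposition, in particular the case $\beta<(1+\sqrt5)/2$ where $\Delta_\beta$ protrudes beyond $I$ and an orbit may meet the hole already during the entry phase; there I would argue directly that the offending entry-region points again split into affine copies scaled by $\beta^{-j}$, so the same convolution estimate survives, or else invoke the bound of the second paragraph, which already forces $e_\beta=\log\beta$ for those $\beta$.
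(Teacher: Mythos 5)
The paper gives no argument for this proposition --- it is stated with ``the proof is omitted'' --- so there is nothing to compare your route against; what matters is whether your argument stands on its own, and it essentially does. The two halves are sound: the identity $\widetilde\Gamma_{\beta,n}\cap I=\Gamma_{\beta,n}$ (valid because $T_\beta I\subseteq I$) gives $\limsup_n(-\log\widetilde a_n/n)\le e_\beta$, and the first-entry decomposition of $[1,1/(\beta-1))$ into the affine pieces $E_j$, combined with the elementary bound $e_\beta\le\log\beta$ from the cylinder $[0,\beta^{-n})$, gives the matching $\liminf$. (Every point of $[1,1/(\beta-1))$ does enter $I$ in finite time, since $1/(\beta-1)$ is a repelling fixed point, so the $E_j$ exhaust the entry region up to one point; and since $e_\beta$ is assumed to exist, your two-sided bound in fact \emph{proves} that the limit defining $\widetilde e_\beta$ exists, which is a small bonus over the paper's bare assertion.) The one weak spot is your treatment of $\beta<(1+\sqrt5)/2$, where $\Delta_\beta$ protrudes past $1$: your fallback that the bound $e_\beta\le\log\beta$ ``already forces $e_\beta=\log\beta$ for those $\beta$'' is not justified --- paragraph two gives only an upper bound, and the reverse inequality is essentially the content of the main theorem for small $\beta$, so invoking it here would be circular. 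Fortunately that fallback is unnecessary: for $j\le n$ the estimate $\lambda(\widetilde\Gamma_{\beta,n}\cap E_j)\le\beta^{-j}a_{n-j}$ survives verbatim (dropping the entry-phase constraints only enlarges the set), and the new contribution from points with first-entry time $j>n$ --- those hit the hole while still in $[1,1/(\beta(\beta-1)))$ --- is contained in a single interval of length $O(\beta^{-n})$ for the expanding affine branch $x\mapsto\beta x-1$, hence is absorbed by the same $e^{-(e_\beta-\epsilon)n}\ge\beta^{-n}$ comparison. With that one extra term written down, your convolution argument closes the remaining case and the proof is complete.
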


We denote by $\Lambda _{\beta }^{\ast }$ the collection of points whose
orbits never fall in the gap $\Delta _{\beta }$. Note that $\Lambda _{\beta
}^{\ast }$ is $T_{\beta }$-invariant. Therefore, $(\Lambda _{\beta }^{\ast
},T_{\beta }^{\ast })$ is a dynamical system on its own right, where $%
T_{\beta }^{\ast }=T_{\beta }|_{\Lambda _{\beta }^{\ast }}$. Also we denote
by $\Sigma _{\beta }^{\ast }=\pi _{\beta }^{-1}\Lambda _{\beta }^{\ast }$
the symbolic space. Let $a=a_{\beta }:=1/\beta $ and $b=b_{\beta }:=1/\beta
(\beta -1)$. Define $\mathcal{F}:=\{\beta \in (1,2]:$ $a$ and $b$ fall in $%
\Delta _{\beta }$ under iteration$\}$ and $\mathcal{N}=I\backslash \mathcal{F%
}$, i.e., $a$ or $b$ does not fall in the hole $\Delta _{\beta }$ under $%
T_{\beta }$ for $\beta \in \mathcal{N}$. Proposition \ref{Prop: 3} shows
that $\Sigma _{\beta }^{\ast }$ is a subshift of finite type for $\beta \in 
\mathcal{F}$.

\subsection{The case where $\protect\beta \in \mathcal{F}$}

In this section, we discuss the case where $\beta \in \mathcal{F}$. The aim
of this section is to define a new map $\mathbf{T}_{\beta }$ which enables
us to apply the results of Afraimovich and Bunimovich \cite{AB-N2010} on
open systems. Introduce a piecewise linear map $\mathbf{T}_{\beta
}:I\rightarrow I$ from $T_{\beta }$ as follows.

\begin{equation*}
\mathbf{T}_{\beta }(x)=\left\{ 
\begin{array}{ll}
T_{\beta }(x), & \text{if }x\notin \Delta _{\beta }; \\ 
x, & \text{if }x\in \Delta _{\beta }.%
\end{array}%
\right.
\end{equation*}%
That is, those points which fall in $\Delta _{\beta }$ under iteration of $%
T_{\beta }$ are \textbf{stuck} by $\mathbf{T}_{\beta }$.

For $\beta \in \mathcal{F}$, let $i_{\ast }\geq 1$ be such that $T_{\beta
}^{i_{\ast }}\ast \in \Delta _{\beta }$ and $T_{\beta }^{i}\ast \notin
\Delta _{\beta }$ for $0\leq i<i_{\ast }$, where $\ast $ stands for $a$ or $%
b $. That is, $i_{\ast }$ is the first return time of the orbits of $\ast $
falls in the hole $\Delta _{\beta }$. Denote by $\mathcal{A}_{\beta
}=\{T_{\beta }^{k}a\}_{k=0}^{i_{a}}$ and $\mathcal{B}_{\beta }=\{T_{\beta
}^{k}b\}_{k=0}^{i_{b}}$. Let $\mathcal{C}_{\beta }=\mathcal{A}_{\beta }\cup 
\mathcal{B}_{\beta }$ be an ordered set in $\mathbb{R}$. Then $\mathcal{C}%
_{\beta }$ is a partition of $I$. For $\beta \in \mathcal{F}$, the following
lemma provides the explicit representation of the Markov partition of $%
\mathbf{T}_{\beta }$.

\begin{lemma}
\label{Lma: 1}For $\beta \in \mathcal{F}$, $\mathcal{C}_{\beta }$ is a
Markov partition for $\mathbf{T}_{\beta }$.
\end{lemma}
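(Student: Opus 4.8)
The plan is to verify directly the two conditions that characterize a Markov partition for the piecewise affine map $\mathbf{T}_\beta$: (i) that $\mathcal{C}_\beta$ contains every breakpoint of $\mathbf{T}_\beta$ together with the endpoints $0,1$ of $I$, and (ii) that $\mathcal{C}_\beta$ is forward invariant, $\mathbf{T}_\beta(\mathcal{C}_\beta)\subseteq\mathcal{C}_\beta$. Recall that $\mathbf{T}_\beta$ has slope $\beta$ on the two pieces $[0,a)$ and $(b,1)$ lying outside the hole and slope $1$ (the identity) on $\Delta_\beta=(a,b)$, so its only breakpoints are the endpoints $a=1/\beta$ and $b=1/\beta(\beta-1)$ of $\Delta_\beta$. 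Since $a=T_\beta^0 a\in\mathcal{A}_\beta$ and $b=T_\beta^0 b\in\mathcal{B}_\beta$, both breakpoints lie in $\mathcal{C}_\beta$ by construction, and $0,1$ are adjoined as the endpoints of $I$; this gives (i).

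For (ii) I would argue orbit point by orbit point. Consider the orbit of $a$: whenever $0\le k<i_a$ the definition of the first return time forces $T_\beta^k a\notin\Delta_\beta$, so $\mathbf{T}_\beta$ agrees with $T_\beta$ at that point and $\mathbf{T}_\beta(T_\beta^k a)=T_\beta^{k+1}a\in\mathcal{A}_\beta$; at the terminal index $k=i_a$ the point $T_\beta^{i_a}a$ lies in $\Delta_\beta$, hence is \emph{stuck}, so $\mathbf{T}_\beta(T_\beta^{i_a}a)=T_\beta^{i_a}a\in\mathcal{A}_\beta$. The identical reasoning applies to $\mathcal{B}_\beta$, giving $\mathbf{T}_\beta(\mathcal{C}_\beta)\subseteq\mathcal{C}_\beta$. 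The hypothesis $\beta\in\mathcal{F}$ enters exactly here: it makes $i_a,i_b$ finite, so that $\mathcal{C}_\beta$ is a finite set that closes up, and it is precisely the hypothesis of Proposition~\ref{Prop: 3}.

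Granting (i) and (ii), the Markov property is then formal. Each atom $[c_j,c_{j+1}]$ cut out by consecutive points of $\mathcal{C}_\beta$ has no breakpoint in its interior, so $\mathbf{T}_\beta$ is affine and strictly monotone there (or the identity, for an atom inside $\Delta_\beta$); thus it carries the atom onto the closed interval bounded by the one-sided images of $c_j$ and $c_{j+1}$. By forward invariance these image endpoints lie in $\mathcal{C}_\beta$, and any subinterval of $I$ whose two endpoints are partition points is automatically a union of atoms. Hence every atom maps onto a union of atoms, which is the Markov condition (atoms inside the hole map identically onto themselves, a degenerate but admissible case).

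The main obstacle is the bookkeeping concealed inside forward invariance, caused by the discontinuities of $T_\beta$ at $a$ and $b$ and by the half-openness of $I=[0,1)$. The point is that the orbit $\mathcal{A}_\beta$ of the left endpoint $a$ cannot be the literal forward greedy orbit, since that is trivial ($T_\beta(1/\beta)=0$); the dynamically meaningful orbit is the one generated by the left-branch value $\lim_{x\to a^-}\beta x=1$, so that $\mathcal{A}_\beta$ is effectively the orbit of $1$, while by contrast the forward greedy orbit $b,\,\beta b-1,\dots$ of $b$ is already the correct right-branch one. One must therefore track the correct one-sided critical value at $a$, account for the endpoint $1$ (which is not itself a point of $I$) as the right boundary of the partition, and check that these values genuinely close up into the finite set $\mathcal{C}_\beta$ — which is exactly what $\beta\in\mathcal{F}$ guarantees. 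The symmetry $a+b=1/(\beta-1)$ of the hole about the centre of $J_\beta$ is what ties the two orbits together and keeps this verification manageable.
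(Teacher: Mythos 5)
Your proof is correct and takes essentially the same route as the paper: the paper's entire argument is the forward--invariance check $\mathbf{T}_{\beta}(\mathcal{C}_{\beta})\subseteq \mathcal{C}_{\beta}$, split into the same two cases (non-terminal orbit points where $\mathbf{T}_{\beta}$ agrees with $T_{\beta}$, versus the terminal point $T_{\beta}^{i_{\ast}}\ast\in\Delta_{\beta}$ which is stuck), with the reduction to that check and the breakpoint condition left implicit. Your extra care is if anything an improvement --- your case analysis is stated more precisely than the paper's (which writes ``$T_{\beta}z\in\Delta_{\beta}$ implies $\mathbf{T}_{\beta}z=z$,'' conflating $z\in\Delta_{\beta}$ with $T_{\beta}z\in\Delta_{\beta}$), and your remark that the orbit of $a$ must be taken through the left-branch critical value (since literally $T_{\beta}(1/\beta)=0$ never meets the hole) flags a genuine imprecision in the paper's definition of $\mathcal{A}_{\beta}$ rather than a gap in your own argument.
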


\begin{proof}
If suffices to show that $\mathbf{T}_{\beta }z\in \mathcal{C}_{\beta }$ for
all $z\in \mathcal{C}_{\beta }$. We claim that, if $z\in \mathcal{C}_{\beta
} $ and $T_{\beta }z\notin \Delta _{\beta }$, then $\mathbf{T}_{\beta
}z=T_{\beta }z\in \mathcal{C}_{\beta }.$ If this is not the case, then $%
T_{\beta }z\in \Delta _{\beta }$ implies $\mathbf{T}_{\beta }z=z\in \mathcal{%
C}_{\beta }$, which completes the proof.
\end{proof}

For $\beta \in \mathcal{F}$, let $\eta =\{\eta _{j}\}_{j\in \mathcal{I}}$ be
the Markov partition of $\mathbf{T}_{\beta }$ (Lemma \ref{Lma: 1}).
Decompose the index set $\mathcal{I}=\mathcal{I}_{H}\cup \mathcal{I}_{0}$,
where $\mathcal{I}_{H}=\left\{ i\in \mathcal{I}:\eta _{i}\subseteq \Delta
_{\beta }\right\} $ and $\mathcal{I}_{0}=\mathcal{I}$ $\backslash $ $%
\mathcal{I}_{H}$, and let $A_{\beta }$ be the corresponding transition
matrix, i.e., $A_{\beta }(i,j)=1$ if $\mathbf{T}_{\beta }\eta _{i}\subseteq
\eta _{j}$ and $A_{\beta }(i,j)=0$ otherwise. Denote by $A_{\beta }^{-}$ the 
$0$-$1$ matrix which is derived by deleting the $\mathcal{I}_{H}$-columns
and rows of $A_{\beta }$. Let $X_{\beta }^{-}=X_{A_{\beta }^{-}}$ be the
subshift generated by the adjacency matrix $A_{\beta }^{-}$. Define $\Theta :%
\mathcal{I}\rightarrow \{0,1\}$ by $\Theta (i)=0$ if $\eta _{i}\subseteq
I_{0}$ and $\Theta (i)=1$ if $\eta _{i}\subseteq I_{1}$. Let $\theta
:X_{\beta }^{-}\rightarrow \{0,1\}^{\mathbb{N}}$ be the map induced by $%
\Theta ,$ i.e., $\theta (\omega )=(\Theta (\omega _{1}),\Theta (\omega
_{2}),\ldots )$, where $\omega =(\omega _{1},\omega _{2},\ldots )\in
X_{\beta }^{-}$. Since the indices of $A_{\beta }^{-}$ are those intervals
of $\mathcal{I}_{0}=\mathcal{I}$ $\backslash $ $\mathcal{I}_{H}$, it is
evident that 
\begin{equation}
h_{top}(X_{\beta }^{-})=h_{top}(\Sigma _{\beta }^{\ast })\text{.}  \label{1}
\end{equation}

Let $M$ be a square matrix. We denote by $\rho _{M}$ the maximal eigenvalue
of $M$. Define $B_{\beta }=A_{\beta }\times $ diag$\left( 1/\beta ,\ldots
,1/\beta \right) $, and $B_{\beta }^{-}$ is derived by deleting the $%
\mathcal{I}_{H}$-columns and rows of $B_{\beta }$. The following result in 
\cite{AB-N2010} links the escape rate $e_{\beta }$ with the entropy $%
h_{top}(\Sigma _{\beta }^{\ast })$.

\begin{lemma}[Theorem 4, \protect\cite{AB-N2010}]
\label{Lma: 3}The measure $\lambda (\Gamma _{\beta ,n})$ satisfies the
following asymptotic equality: 
\begin{equation*}
\lambda (\Gamma _{\beta ,n})\simeq Q_{0}(n)\rho _{B_{\beta }^{-}}^{n}\lambda
(\Delta _{\beta })\text{,}
\end{equation*}%
where $Q_{0}(n)$ is a polynomial with degree less then the number of the
holes $m$ (herein $m=1$).
\end{lemma}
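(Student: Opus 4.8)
The plan is to reduce the computation of $\lambda(\Gamma_{\beta,n})$ to a weighted path-counting problem on the Markov graph of $\mathbf{T}_{\beta}$ and then to read off the asymptotics from the spectral data of $B_{\beta}^{-}$. Since $\Gamma_{\beta,n}$ records only the behaviour of an orbit up to its first visit to the hole, replacing $T_{\beta}$ by the stuck map $\mathbf{T}_{\beta}$ leaves this set unchanged, and by Lemma \ref{Lma: 1} the partition $\eta=\{\eta_{j}\}_{j\in\mathcal{I}}$ is Markov for $\mathbf{T}_{\beta}$. First I would associate to each admissible word $i_{0}i_{1}\cdots i_{n}$ the cylinder $[i_{0}\cdots i_{n}]=\{x\in\eta_{i_{0}}:\mathbf{T}_{\beta}^{k}x\in\eta_{i_{k}},\ 0\le k\le n\}$ and observe that $x\in\Gamma_{\beta,n}$ precisely when its itinerary satisfies $i_{0},\dots,i_{n-1}\in\mathcal{I}_{0}$ while $i_{n}\in\mathcal{I}_{H}$; thus $\Gamma_{\beta,n}$ is the disjoint union of these cylinders.

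The second step uses the constant slope. Because $\mathbf{T}_{\beta}$ is affine with slope $\beta$ on every interval of $\mathcal{I}_{0}$ and the partition is Markov, each inverse branch contracts Lebesgue measure by exactly $1/\beta$, so, writing $\ell_{j}:=\lambda(\eta_{j})$, one has $\lambda([i_{0}\cdots i_{n}])=\beta^{-n}\ell_{i_{n}}$ by induction on $n$. Summing over the admissible itineraries identified above, and encoding admissibility through the entries of $A_{\beta}$, gives
\[
\lambda(\Gamma_{\beta,n})=\beta^{-n}\sum_{\substack{i_{0},\dots,i_{n-1}\in\mathcal{I}_{0}\\ i_{n}\in\mathcal{I}_{H}}} A_{\beta}(i_{0},i_{1})\cdots A_{\beta}(i_{n-1},i_{n})\,\ell_{i_{n}}=\beta^{-1}\,\mathbf{1}^{\top}(B_{\beta}^{-})^{\,n-1}\mathbf{w},
\]
where $\mathbf{1}$ is the all-ones vector indexed by $\mathcal{I}_{0}$, the vector $\mathbf{w}=(w_{i})_{i\in\mathcal{I}_{0}}$ has entries $w_{i}=\sum_{j\in\mathcal{I}_{H}}A_{\beta}(i,j)\,\ell_{j}$, and I have used $(A_{\beta}^{-})^{n-1}=\beta^{\,n-1}(B_{\beta}^{-})^{n-1}$. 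The point of this rewriting is that $\mathbf{w}$ records exactly the portions of the hole directly reachable from each surviving state, so it is the natural carrier of the factor $\lambda(\Delta_{\beta})=\sum_{j\in\mathcal{I}_{H}}\ell_{j}$.

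The third step is to apply Perron--Frobenius, or more generally Jordan-form, asymptotics to $(B_{\beta}^{-})^{\,n-1}$. Writing $\rho:=\rho_{B_{\beta}^{-}}$, the dominant contribution of $(B_{\beta}^{-})^{\,n-1}$ is $\rho^{\,n-1}$ times a matrix-valued polynomial in $n$ whose degree is governed by the largest Jordan block attached to the eigenvalues of modulus $\rho$. Contracting against $\mathbf{1}^{\top}$ on the left and $\mathbf{w}$ on the right and absorbing the scalar prefactor then yields $\lambda(\Gamma_{\beta,n})\simeq Q_{0}(n)\,\rho^{\,n}\,\lambda(\Delta_{\beta})$, with the $\lambda(\Delta_{\beta})$ factor supplied by $\mathbf{w}$; the bound $\deg Q_{0}<m$ (so $Q_{0}$ is a constant when $m=1$) reflects that a single hole keeps the leading eigenvalue of $B_{\beta}^{-}$ simple on the relevant surviving component.

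The hard part will be this last step: establishing the sharp asymptotic equivalence $\simeq$ rather than a mere two-sided bound, and pinning down the degree of $Q_{0}$. This requires a genuine analysis of the spectral structure of $B_{\beta}^{-}$ for $\beta\in\mathcal{F}$ — the irreducibility and period of the surviving component, the multiplicity and Jordan index of $\rho$, and the verification that the test vectors $\mathbf{1}^{\top}$ and $\mathbf{w}$ do not annihilate the dominant eigenprojection, so that the leading coefficient of $Q_{0}$ is nonzero. Alternatively, since the hypotheses of Proposition \ref{Prop: 3} hold for $\beta\in\mathcal{F}$ and $\mathbf{T}_{\beta}$ is precisely the piecewise-linear Markov map just constructed, one may bypass this spectral bookkeeping and invoke Theorem 4 of \cite{AB-N2010} directly, after checking that the Afraimovich--Bunimovich framework applies to $(\,I,\mathbf{T}_{\beta},\Delta_{\beta}\,)$.
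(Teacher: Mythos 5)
The paper does not actually prove this lemma: it is imported verbatim as Theorem~4 of \cite{AB-N2010} and used as a black box, so any genuine derivation is ``a different route'' by default. Your first two steps are correct and give more than the paper offers: since $\Gamma_{\beta,n}$ only sees the orbit up to its first entry into the hole, it coincides with the union of the Markov cylinders $[i_0\cdots i_n]$ with $i_0,\dots,i_{n-1}\in\mathcal{I}_0$ and $i_n\in\mathcal{I}_H$, and the constant slope gives the exact identity $\lambda(\Gamma_{\beta,n})=\beta^{-1}\mathbf{1}^{\top}(B_{\beta}^{-})^{n-1}\mathbf{w}$. (One caveat: this computation implicitly uses the convention $A_{\beta}(i,j)=1$ iff $\eta_j\subseteq\mathbf{T}_{\beta}\eta_i$, i.e.\ full branches; the paper writes the inclusion the other way, which is surely a typo, but you should say explicitly that full-branch Markov structure is what makes $\lambda([i_0\cdots i_n])=\beta^{-n}\ell_{i_n}$ hold with equality rather than inequality.)

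The third step is where the actual content of the cited theorem lives, and you correctly identify it as the part you have not done. To upgrade the exact formula to $\lambda(\Gamma_{\beta,n})\simeq Q_0(n)\rho_{B_{\beta}^{-}}^{n}\lambda(\Delta_{\beta})$ one must handle three issues that are not automatic: (i) $B_{\beta}^{-}$ need not be irreducible (the surviving symbolic system can have wandering states and several transitive components), and the degree bound $\deg Q_0<m$ comes from counting how many irreducible components achieving the spectral radius can be chained along an admissible path --- this is precisely where the ``number of holes'' enters in \cite{AB-N2010}, not from simplicity of the eigenvalue per se; (ii) if the dominant component is periodic, $\mathbf{1}^{\top}(B_{\beta}^{-})^{n-1}\mathbf{w}$ can vanish or oscillate along arithmetic progressions, so the meaning of $\simeq$ needs care; (iii) one must check $\mathbf{w}$ is not annihilated by the dominant eigenprojection, i.e.\ that the hole is reachable from the component carrying $\rho_{B_{\beta}^{-}}$ (here nonnegativity of $B_{\beta}^{-}$, $\mathbf{1}$, $\mathbf{w}$ and the fact that almost every orbit falls into the hole make this plausible, but it deserves a sentence). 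Since you explicitly offer the fallback of citing Theorem~4 of \cite{AB-N2010} after verifying its hypotheses for $(I,\mathbf{T}_{\beta},\Delta_{\beta})$ --- which is exactly what the paper does --- your account is at least as complete as the paper's; but as a standalone proof it stops short at the spectral bookkeeping.
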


\begin{proof}[Proof of Theorem \protect\ref{Thm: Main} for $\protect\beta %
\in \mathcal{F}$]
Suppose $\beta \in \mathcal{F}$. Since $\rho _{B_{\beta }^{-}}=\beta
^{-1}\rho _{A_{\beta }^{-}},$ Lemma \ref{Lma: 3} is applied to show that 
\begin{eqnarray*}
e_{\beta } &=&\lim_{n\rightarrow \infty }\frac{-\log \lambda (\Gamma _{\beta
,n})}{n}=-\log \rho _{B_{\beta }^{-}}=\log \beta -\log \rho _{A_{\beta }^{-}}
\\
&=&\log \beta -h_{top}(X_{\beta }^{-})=\log \beta -h_{top}(\Sigma _{\beta
}^{\ast })\text{.}
\end{eqnarray*}%
Since $\dim _{H}\mathcal{U}_{\beta }=\frac{h_{top}(\Sigma _{\beta }^{\ast })%
}{\log \beta }$ (Theorem 1.3, \cite{KKL-apa2015}), we have 
\begin{equation*}
E_{\beta }=\frac{e_{\beta }}{\log \beta }=1-\frac{h_{top}(\Sigma ^{\ast })}{%
\log \beta }=1-\dim _{H}\mathcal{U}_{\beta }\text{.}
\end{equation*}%
This completes the proof for $\beta \in \mathcal{F}$.
\end{proof}

\subsection{The case where $\protect\beta \in \mathcal{N}$}

\begin{proof}[Proof of Theorem \protect\ref{Thm: Main}]
Let $\beta \in \mathcal{N}$, $\Lambda ^{\ast }=\Lambda _{\beta }^{\ast }$, $%
T^{\ast }=T_{\beta }^{\ast },e=e_{\beta }$, and $E=E_{\beta }$. The idea of
this proof is to approximate $(\Lambda ^{\ast },T^{\ast })$ by the open
systems $\{(\Lambda _{l}^{\ast },T_{l}^{\ast })\}_{l=1}^{\infty }$ such that 
$\Sigma _{l}^{\ast }=\pi _{\beta }^{-1}\Lambda _{l}^{\ast }$ is a SFT for
all $l\geq 1$.

It suffices to prove the case of $T^{n}a\notin \Delta _{\beta }$ for all $%
n\geq 1$ and $b$ fall in $\Delta _{\beta }$ under iteration of $T$. The same
argument remains valid for other cases. Since the set of points whose orbits
fall in the hole $\Delta _{\beta }$ is of full Lebesgue measure (Remark \ref%
{Rmk: 1} (ii)), we construct two sequences $\{a_{l}\}_{l=1}^{\infty
}\subseteq \mathbb{R}$ and $\{i_{l}\}_{l=1}^{\infty }\subseteq \mathbb{N}$
such that $a_{l}\leq a_{l+1}$, $\lim_{l\rightarrow \infty }a_{l}=a=\frac{1}{%
\beta }$, $T^{i_{l}}a_{l}\in \Delta _{\beta }$, and $T^{i}a_{l}\notin \Delta
_{\beta }$ for $1\leq i<i_{l}$. Denote by $\Lambda _{l}^{\ast }$ the set of
points whose orbits never fall in $\Delta _{l}:=(a_{l},\frac{1}{\beta (\beta
-1)})$. Thus we have $\Lambda _{l}^{\ast }\subseteq \Lambda _{l+1}^{\ast }$
and $\Lambda _{\beta }^{\ast }=\overline{\cup _{l=1}^{\infty }\Lambda
_{l}^{\ast }}$. Define $\Sigma _{l}^{\ast }=\pi _{\beta }^{-1}\Lambda
_{l}^{\ast }$, Proposition \ref{Prop: 3} infers that $\Sigma _{l}^{\ast }$
is a SFT (sinec $a_{l}$ and $b=\frac{1}{\beta (\beta -1)}$ fall in $\Delta
_{l}$). Construct $A_{l},A_{l}^{-},B_{l},B_{l}^{-}$, and $X_{l}^{-}$
analogously to the case where $\beta \in \mathcal{F}$. Let $\rho _{l}=\rho
_{B_{l}^{-}}$. We then have $h_{top}(\Lambda _{l})=h_{top}(\Sigma _{l})=\log
\rho _{l}$ under the same discussion of (\ref{1}).

Clearly, $\rho _{l+1}\geq \rho _{l}$. We claim that the sequence $%
\{a_{l}\}_{l=1}^{\infty }$ and $\{i_{l}\}_{l=1}^{\infty }$ can be chosen so
that $\rho _{l+1}>\rho _{l}$. Since $\Sigma _{l}^{\ast }$ is a SFT, Theorem
6.4 in \cite{BKT-ETaDS2011} shows that the number of topologically
transitive components of $\left( \Lambda _{l}^{\ast }\right) ^{\text{nw}}$
of $\Lambda _{l}^{\ast }$ is at most $2$ (it is $2r$ actually, where $r$ is
the number of holes, and $r=1$ in our case), where \textquotedblleft
nw\textquotedblright\ stands for the non-wandering set of $\Lambda
_{l}^{\ast }$. Since $0$ is a trivial transitive component, thus the number
of non-trivial proper topological transitive components with uncountable
elements is exact one.

Once the pair $\left( a_{l},i_{l}\right) $ has been chosen for $l\geq 1$, we
pick a pair $(a_{l+1},i_{l+1})$ such that $T^{i_{l+1}}a_{l+1}\in \Delta
_{l+1}$ but $T^{i_{l}}a_{l+1}\notin \Delta _{l}$. That is, $a_{l+1}$ does
not fall in the hole $\Delta _{l}$ for the first $i_{l}$ iterations. This is
possible since $\beta >1$ and the function $a\rightarrow T^{i_{l}}a$ grows
fast with respect to $a$ if $i_{l}\geq 1$ is large enough. Therefore, $%
a_{l+1}$ can be chosen among such those points and wait for its orbit fall
in the hole $\Delta _{l+1}$ again. For each $l\geq 1$, $T_{l}^{\ast }$
admits a Markov partition and is topologically transitive. Let $A_{l}^{-}$
and $A_{l+1}^{-}$ be the corresponding adjacency matrices. We may assume
that $A_{l}^{-}$ and $A_{l+1}^{-}$ are of the same size, otherwise one can
present both of them by $N$th higher block representation (Definition 1.4.1, 
\cite{LM-1995}) until the lengths of all forbidden sets in $X_{l}^{-}$ and $%
X_{l+1}^{-}$ are less than $N$ for some $N\in \mathbb{N}$. From the
construction of $(a_{l+1},i_{l+1}) $ above we know that there exists at
least a word which belongs to $X_{l+1}^{-}$ but not belong to $X_{l}^{-}$.
That is, $A_{l}^{-}<A_{l+1}^{-}$. Thus, we have $B_{l}^{-}<B_{l+1}^{-}$.
Theorem 4.4.7 in \cite{LM-1995} indicates that $\rho _{l}<\rho _{l+1}$.

Since $\Lambda ^{\ast }=\overline{\cup _{l=1}^{\infty }\Lambda _{l}^{\ast }}$%
, it follows from Lemma 4.1.10 in \cite{ALM-2000} (see Remark \ref{Rmk: 2}
below) that 
\begin{equation*}
h_{top}(\Lambda ^{\ast })=\sup_{l}h_{top}(\Lambda _{l}^{\ast
})=\lim_{l\rightarrow \infty }h_{top}(\Lambda _{l}^{\ast }).
\end{equation*}

Let $e_{l}$ (resp. $E_{l}$) be the escape rate corresponding to the hole $%
\Delta _{l}$. From Lemma \ref{Lma: 3}, we deduce that, for all $l\geq 1$, 
\begin{equation*}
e_{l}=\log \beta -\log \rho _{l}\text{ and }E_{l}=\frac{e_{l}}{\log \beta }%
=1-\frac{\log \rho _{l}}{\log \beta }=1-\frac{h_{top}(\Lambda _{l}^{\ast })}{%
\log \beta }\text{.}
\end{equation*}%
Taking $l\rightarrow \infty $, then we have%
\begin{equation*}
E=\lim_{l\rightarrow \infty }E_{l}=1-\frac{1}{\log \beta }\lim_{l\rightarrow
\infty }h_{top}(\Lambda _{l}^{\ast })=1-\frac{1}{\log \beta }h_{top}(\Lambda
^{\ast })=1-\dim _{H}\mathcal{U}_{\beta }\text{.}
\end{equation*}%
The last equality comes from the fact that $\dim _{H}\mathcal{U}_{\beta }=%
\frac{h_{top}(\Lambda ^{\ast })}{\log \beta }$ for general subshift (Theorem
1.3, \cite{KKL-apa2015}). That is, $\dim _{H}\mathcal{U}_{\beta }+E=1$,
which establishes the formula.
\end{proof}

\begin{remark}
\label{Rmk: 2} Lemma 4.1.10 in \cite{ALM-2000} shows that if $X=\cup
_{i=1}^{k}Y_{i}$, and the sets $Y_{i}$ are closed and invariant, then $%
h_{top}(X)=\max_{i}h_{top}(Y_{i})$. However, the Theorem stays true if we
replace a finite cover of $X$ by an infinite one (see \cite{ALM-2000} for
more details).
\end{remark}

\begin{proof}[Proof of Corollary \protect\ref{cordl}]
(1) and (3) of Corollary \ref{cordl} are the immediate consequences of the
following facts: (i) $\dim _{H}\mathcal{U}_{\beta }=0$ for $1<\beta \leq
\beta _{\ast }$, (ii) $\dim _{H}\mathcal{U}_{2}=1$, and (iii) $\dim _{H}%
\mathcal{U}_{\beta }=\frac{h_{top}(Z_{t_{1}\cdots t_{p}})}{\log \beta }$ for 
$\beta \in \lbrack \beta _{L,}\beta _{U}]$, where $h_{top}(Z_{t_{1}\cdots
t_{p}})$ is the topological entropy of the SFT (\ref{2}) (Theorem 2.6, \cite%
{KL-N2015}). Finally, combining Theorem \ref{Thm: Main} with the fact that $%
q\rightarrow \dim _{H}\mathcal{U}_{\beta }$ forms a devil staircase function
(Theorem 1.7, \cite{KKL-apa2015}) yields (2). This completes the proof.
\end{proof}

\bibliographystyle{amsplain}
\bibliography{ban}


\end{document}